\newtheorem{theorem}{Theorem}[section]
\newtheorem{lemma}{Lemma}[section]
\newtheorem{definition}{Definition}[section]
\newtheorem{corollary}{Corollary}[section]
\newtheorem{acknowledgment*}{Acknowledgment}
\newtheorem{remark}{Remark}[section]
\numberwithin{equation}{section}
\newcommand{\be}{\begin{equation}}
\newcommand{\ee}{\end{equation}}
\newcommand{\bd}{\begin{displaymath}}
\newcommand{\ed}{\end{displaymath}}
\newcommand{\ccM}{{\cal B}^+_M}
\newcommand{\ccn}{{\cal B}^{+,n}}
\newcommand{\R}{\mathbb R}
\newcommand{\z}{\zeta}
\begin{document}
\Large
\begin{center} \bf{Limit Theorems for Optimal Mass Transportation and Applications to  Networks}\end{center}
\normalsize
\begin{center} Gershon Wolansky\footnote{ gershonw@math.technion.ac.il}\\
Department of Mathematics \\
Technion, Israel Institute of Technology \\
Haifa 32000, Israel \end{center}
\begin{abstract}
It is shown that optimal network plans can be obtained as a limit of point allocations. These  problems are obtained by minimizing the mass transportation on the set of atomic measures of prescribed number of atoms.
\end{abstract}

\section{Introduction}
Optimal mass transportation was introduced by Monge some 200 years ago and
is, today, a source of a large number of results in analysis, geometry and
convexity.
\par
Optimal Network Theory was recently developed. It can be formulated in terms of
Monge-transport corresponding to some non-standard metrics.
 For updated references on optimal networks via mass transportation see [BS, BCM].
\par
In this paper we restrict ourselves to the transport of a finite number of points. Consider $N$ points $\{ x_1, \ldots x_N\}$ (sources) in a state
space (say, $\R^k$), and another $N$ points $\{ y_1, \ldots y_N\}\subset\R^k$ (sinks). For each source $x_i$ we attribute a certain amount of mass $m_i\geq 0$. Similarly, $m^*_i\geq 0$ is the capacity attributed to the sink $y_i$, while
$$ \sum_1^N m_i=\sum_1^N m_i^* > 0 \ . $$ We denote this system by an atomic measure
$\lambda:= \lambda^+-\lambda^-$ where
\be\label{lambda} \lambda^+=\sum_1^N m_i\delta_{x_i} \ \ \ ; \ \ \ \lambda^-=\sum_1^N m^*_i\delta_{y_i} \ee
where $\delta_{(\cdot)}$ is the Dirac delta function.
\par
The object is to transport the masses from the sources to sinks in an optimal way, such that the sinks are filled up according to their capacity. A natural cost was suggested by Xia [X]: For each $q>1$,   $ \widehat{W}^{(q)}(\lambda)$ is defined below:
\begin{definition} \label{gammal}
Given $\lambda$ as in (\ref{lambda}),
\begin{enumerate}
\item
  An oriented, weighted graph $(\hat{\gamma}, m)$ associated with $\lambda$ is a graph $\hat{\gamma}$ embedded in $\R^k$,  composed of vertices $V(\hat{\gamma})$ and edges $E(\hat{\gamma})$. The \em{orientation} of an edge $e\in E(\hat{\gamma})$ is determined by $\partial e= v_e^+-v_e^-$ where $v_e^\pm\in V(\hat{\gamma})$ are the vertices composing the end points of $e$. The graph $\hat{\gamma}$ and the capacity function $m:E(\hat{\gamma})\rightarrow \R^+\cup\{0\}$ satisfy
  \begin{description}
  \item{(a)}  $\{x_1, \ldots x_N, y_1, \ldots y_N\}\subset V(\hat{\gamma})$.
 \item{(b)} For each $i\in\{1, N\}$, $\sum_{\{e, x_i\in \partial^+e\}} m_e = m_i$ and $\sum_{\{e, y_i\in \partial^-e\}} m_e = m^*_i$, where $\partial^\pm e:= v_e^\pm$.
     \item{(c)} For each $v\in V(\hat{\gamma})- \{ x_1, \ldots y_N\}$, $\sum_{\{ e; v\in\partial^+ e\}}m_e = \sum_{\{ e; v\in\partial^- e\}}m_e$.
 \end{description}
 \item The set of all weighted graphs associated with $\lambda$ is denoted by $\Gamma(\lambda)$.
 \item
 \be\label{defgraph} \widehat{W}^{(q)}(\lambda):= \inf_{(\hat{\gamma},m)\in  \Gamma(\lambda)} \sum_{e\in E(\hat{\gamma})} |e|m_e^{1/q}\ee
 \end{enumerate}
\end{definition}

There are two special cases which should be noted. In the limit case $q=1$ the optimal graph satisfies $V(\hat{\gamma})= \{ x_1, \ldots y_N\}$ and $\widehat{W}^{(1)}(\lambda)=W_1(\lambda^+, \lambda^-)$.
Here $W_q(\lambda^+, \lambda^-)$ for $q\geq 1$ is the Wasserstein distance between $\lambda^+$ to $\lambda^-$,
$$ W_q(\lambda^+, \lambda^-):= \left(\min_{ \{\gamma^{i,j}\} }\sum_1^N\sum_1^N |x_i-y_j|^q \gamma_{i,j}\right)^{1/q} \ , $$
the minimum is taken in the set of $N\times N$ matrices satisfying
$$ \gamma_{i,j}\geq 0 \ \ \ , \ \ \sum_{i=1}^N \gamma_{i,j}=m^*_j \ \ \ \ ; \ \ \ \sum_{j=1}^N \gamma_{i,j}=m_i \ . $$
In particular, $W_1$ depends only on the difference $\lambda=\lambda^+-\lambda^-$ (which is not the case for $q>1$). \par

The second case is the limit $q=\infty$. This is the celebrated {\it Steiner Tree Problem} [HRW]:
$$   \inf_{\hat{\gamma}\in  \Gamma(\lambda)} \sum_{e\in E(\hat{\gamma})} |e| \ , $$
where, this time, $\Gamma(\lambda)$ is the set of all graphs satisfying $\{x_i, y_j; m_i, m^*_j>0\}\subset V(\hat{\gamma})$ and is, actually, independent of the masses $m_i$ and capacities $m^*_i$ (assumed positive).
\par
In [W, Thm 2]  it was shown  that $W_1$ is obtained from $W_q$ by an asymptotic expression for the limit of infinite mass:
\begin{theorem} If $\lambda=\lambda^+-\lambda^-$ is any Borel measure satisfying $\int d\lambda=0$, then
$$\lim_{M\rightarrow\infty}  M^{1-1/q} \min_{\mu\in\ccM}W_q\left(\mu+\lambda^+, \mu+\lambda^-\right)= W_1(\lambda^+, \lambda^-)$$
where $\ccM$ stands for the set of all positive Borel measures $\mu$  normalized by  $\int d\mu=M$.
\end{theorem}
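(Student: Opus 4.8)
The plan is to prove the two matching one–sided estimates, namely $\liminf_{M\to\infty} M^{1-1/q}\min_{\mu\in\ccM} W_q(\mu+\lambda^+,\mu+\lambda^-)\ge W_1(\lambda^+,\lambda^-)$ and the reverse inequality for the $\limsup$; together they force the limit to exist and to equal $W_1(\lambda^+,\lambda^-)$. I would treat the lower bound first, since it holds uniformly in $\mu$ and needs no construction. Write $m=\int d\lambda^+=\int d\lambda^-$, fix $\mu\in\ccM$, and let $\gamma$ be an optimal plan for $W_q(\mu+\lambda^+,\mu+\lambda^-)$, of total mass $M+m$. For any $1$-Lipschitz $\phi$ the background cancels, so $\int\phi\,d\lambda=\int(\phi(x)-\phi(y))\,d\gamma\le\int|x-y|\,d\gamma$, and Hölder's inequality with conjugate exponents $q$ and $q/(q-1)$ gives $\int|x-y|\,d\gamma\le W_q(\mu+\lambda^+,\mu+\lambda^-)\,(M+m)^{1-1/q}$. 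Taking the supremum over $1$-Lipschitz $\phi$ and invoking Kantorovich--Rubinstein duality yields $W_1(\lambda^+,\lambda^-)\le(M+m)^{1-1/q}W_q(\mu+\lambda^+,\mu+\lambda^-)$, hence $M^{1-1/q}W_q\ge\big(M/(M+m)\big)^{1-1/q}W_1(\lambda^+,\lambda^-)$, which tends to $W_1(\lambda^+,\lambda^-)$ uniformly in $\mu$.

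The substance is the reverse inequality, which I would build from an explicit \emph{conveyor belt} background, first in the atomic case $\lambda^+=\sum a_i\delta_{p_i}$, $\lambda^-=\sum b_j\delta_{q_j}$. An optimal $W_1$-plan is a finite family of edges, the $k$-th carrying mass $w_k$ along a segment of length $L_k$ with $\sum_k w_k L_k=W_1(\lambda^+,\lambda^-)$. I would subdivide the $k$-th segment into $n_k$ equal pieces by interior nodes, place background mass $w_k$ at each interior node, and transport by shifting mass one node forward across every sub-edge. A direct check of the marginals shows this is an admissible plan for $\mu+\lambda^+\to\mu+\lambda^-$ with $\mu$ the sum of all interior atoms, of cost $\sum_k n_k w_k(L_k/n_k)^q=\sum_k w_k L_k^q n_k^{1-q}$ and with $\int d\mu=\sum_k w_k(n_k-1)$.

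Minimizing $\sum_k w_k L_k^q n_k^{1-q}$ subject to $\sum_k w_k n_k=M$ is a short Lagrange computation giving $n_k\propto L_k$, i.e. a common step length $h=W_1(\lambda^+,\lambda^-)/M$; equivalently, one takes $\mu$ proportional to the transport density of the $W_1$-plan. Substituting back gives $W_q^q(\mu+\lambda^+,\mu+\lambda^-)\le W_1(\lambda^+,\lambda^-)^q/M^{q-1}$, up to the integer rounding of the $n_k$ and noting that edges with $L_k<h$ may be transported directly at even smaller cost. Thus $M^{1-1/q}W_q\le W_1(\lambda^+,\lambda^-)+o(1)$ in the atomic case, matching the lower bound.

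For general Borel $\lambda^\pm$ I would approximate in $W_1$ by atomic $\bar\lambda^\pm$ with $W_1(\lambda^\pm,\bar\lambda^\pm)<\eps$ and $W_1(\bar\lambda^+,\bar\lambda^-)\le W_1(\lambda^+,\lambda^-)+\eps$, and split the background as $\mu=\bar\mu+\mu^++\mu^-$, allocating mass $\delta M$ to each of two auxiliary belt systems realizing $\lambda^+\to\bar\lambda^+$ and $\bar\lambda^-\to\lambda^-$, and mass $(1-2\delta)M$ to the main system. Transporting in three stages and using the triangle inequality for $W_q$ gives $M^{1-1/q}W_q\le 2\eps\,\delta^{-(1-1/q)}+(W_1(\lambda^+,\lambda^-)+\eps)(1-2\delta)^{-(1-1/q)}$; letting $\eps\to0$ and then $\delta\to0$ closes the estimate. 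This last step is where I expect the main obstacle: although $W_q(\lambda^+,\bar\lambda^+)$ need not be small when $\lambda^+$ is merely $W_1$-close to $\bar\lambda^+$, assisting the approximating transport with its own share $\delta M$ of background makes its contribution of order $\eps\,\delta^{-(1-1/q)}$, which vanishes in the iterated limit. The care lies in the bookkeeping of these error terms and in verifying admissibility of the composed three-stage plan.
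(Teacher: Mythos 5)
You should know at the outset that the paper does not prove this statement at all: it is quoted from [W, Thm 2] as a known result, so there is no in-paper proof to compare against. The closest internal relative is the paper's own proof of Theorem~\ref{th1}, in which the number of atoms $n$ plays exactly the role your mass parameter $M$ plays here, and measured against that your proposal is the natural argument. Your lower bound (Kantorovich--Rubinstein duality plus H\"older on the optimal plan of total mass $M+m$) is correct and, importantly, uniform in $\mu$; the H\"older step is the mass-parameter analogue of the paper's inequality \eqref{ineq1}. Your atomic-case conveyor belt is precisely the mechanism of the second half of the paper's proof of Theorem~\ref{th1}: your Lagrange computation yielding $n_k\propto L_k$, a common step $h=W_1(\lambda^+,\lambda^-)/M$, and cost $W_1^q(\lambda^+,\lambda^-)/M^{q-1}$ mirrors the minimization there of $\sum_e m_e|e|^q/w_e^{q-1}$ subject to $\sum_e w_e=1$. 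Two small points you wave at should be stated: surplus background mass can be parked as atoms transported to themselves at zero cost, which settles both the exact normalization $\int d\mu=M$ and the integer rounding of the $n_k$; and the triangle inequality gives $W_1(\bar\lambda^+,\bar\lambda^-)\le W_1(\lambda^+,\lambda^-)+2\eps$, not $+\eps$ (immaterial).

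The one genuine soft spot is in the general-Borel step. The estimate $M^{1-1/q}W_q\le 2\eps\,\delta^{-(1-1/q)}+\cdots$ for the two auxiliary stages is asserted, not proved: your belt construction, as built, requires the optimal plan between the endpoint measures to consist of finitely (or countably) many weighted segments, which holds for $\bar\lambda^+\to\bar\lambda^-$ but fails for $\lambda^+\to\bar\lambda^+$ when $\lambda^+$ is non-atomic; and the crude bound $W_q(\mu+\lambda^+,\mu+\bar\lambda^+)\le W_q(\lambda^+,\bar\lambda^+)$ is useless, since any $M$-independent cost blows up under the factor $M^{1-1/q}$. You correctly identify this as the main obstacle but do not resolve it, and your claimed order $\eps\,\delta^{-(1-1/q)}$ silently presupposes the belt estimate for a non-atomic pair. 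The repair is concrete: choose $\bar\lambda^+$ by collapsing the mass of $\lambda^+$ in cells of diameter at most $\eps$, so that the plan $\pi$ from $\lambda^+$ to $\bar\lambda^+$ moves no point farther than $\eps$; then run a belt with a \emph{common} number $n$ of steps along the displacement interpolation, taking $\mu^+$ to be the sum over $j=1,\dots,n-1$ of the push-forwards of $\pi$ under $(x,y)\mapsto(1-j/n)\,x+(j/n)\,y$, and shifting each interpolant one step forward. The $q$-cost is $n^{1-q}\int|x-y|^q\,d\pi\le n^{1-q}\eps^q m$ with background mass $(n-1)m=\delta M$, whence $M^{1-1/q}W_q\le \eps\,m\,\delta^{-(1-1/q)}$ --- exactly the order you claimed --- and your iterated limit $\eps\to 0$, then $\delta\to 0$, closes the argument. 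Note this fix needs no compactness, only that $\lambda^\pm$ have finite mass and finite first moments so that $W_1(\lambda^+,\lambda^-)<\infty$; some such hypothesis is needed for the statement to make sense, a caveat the quoted theorem itself glosses over. With this insertion your proof is complete.
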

If, in particular, $\lambda$ is an atomic measure of the form (\ref{lambda}), than it can be shown that for fixed $M$ the minimizer of $W_q\left(\mu+\lambda^+, \mu+\lambda^-\right)$ in $\ccM$ is an atomic measure of a finite number of atoms as well. \par
 The main result of the current paper demonstrates that the network cost $\widehat{W}^{(q)}$ is obtained by similar expression, where  the total mass $M$ is replaced by the {\it cardinality of the support of the atomic measure} $\mu$.

\section{Main results}

For each $n\in \mathbb{N}$, let $\ccn$ be the set of all atomic, positive measures of at most $n$ atoms, that is:
$$\ccn:=\left\{  \sum_{j=1}^n \alpha_j \delta_{z_j} \ \ \ ; \ \ z_j\in \R^k, \ \alpha_j\geq 0  \ \right\} . $$
\begin{theorem}\label{th1}
For any $q>1$ and $\lambda$ as in (\ref{lambda})
\be\label{main} \lim_{n\rightarrow\infty}  n^{1-1/q} \inf_{\mu\in\ccn}W_q\left(\mu+\lambda^+, \mu+\lambda^-\right)= \widehat{W}^{(q)}(\lambda) \ . \ee
\end{theorem}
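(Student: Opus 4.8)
The plan is to establish the limit \er{main} by proving the two bounds
\[
\limsup_{n\to\infty} n^{1-1/q}\inf_{\mu\in\ccn}W_q(\mu+\lambda^+,\mu+\lambda^-)\le \widehat{W}^{(q)}(\lambda),
\qquad
\liminf_{n\to\infty} n^{1-1/q}\inf_{\mu\in\ccn}W_q(\mu+\lambda^+,\mu+\lambda^-)\ge \widehat{W}^{(q)}(\lambda)
\]
separately, the first by an explicit construction and the second by a flow argument.

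For the upper bound I would use a \emph{bucket-brigade} construction. Fix $\eps>0$ and a graph $(\hat\gamma,m)\in\Gamma(\lambda)$ with $\sum_{e}|e|m_e^{1/q}\le \widehat{W}^{(q)}(\lambda)+\eps$; crucially this graph has a fixed number of edges, independent of $n$. On each edge $e$ I place $n_e$ equally spaced atoms of mass $m_e$, with $\sum_e n_e\le n$, and transport by passing mass from each atom to the next along $e$, matching the incoming and outgoing brigades at every interior vertex (possible by the balance condition (c)). A single edge then contributes $m_e|e|^qn_e^{1-q}$ to $W_q^q$, the vertex-matching corrections being of lower order, so $W_q^q\approx\sum_e m_e|e|^qn_e^{1-q}$. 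Minimising this under $\sum_e n_e=n$ yields the allocation $n_e\propto m_e^{1/q}|e|$, for which every sub-edge carries the common value $|e|m_e^{1/q}/n_e$ and the sum collapses to $n^{1-q}\big(\sum_e|e|m_e^{1/q}\big)^q$. Hence $n^{1-1/q}W_q\le \widehat{W}^{(q)}(\lambda)+\eps$ in the limit, and letting $\eps\to0$ finishes this direction.

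For the lower bound I would show that \emph{every} $\mu\in\ccn$ satisfies $n^{1-1/q}W_q(\mu+\lambda^+,\mu+\lambda^-)\ge \big(\tfrac{n}{n+2N-1}\big)^{1-1/q}\widehat{W}^{(q)}(\lambda)$, which suffices after taking $\inf_\mu$ and $\liminf_n$. Let $\pi$ be an optimal coupling and let $V_c=\{z_1,\dots,z_n\}\cup\{x_i\}\cup\{y_i\}$ be the $\le n+2N$ mass points. Define the net flow $J$ on $V_c$ by $J_{ab}=\pi(a\to b)-\pi(b\to a)$; then $\operatorname{div}J=\lambda^+-\lambda^-=\lambda$, and since $|J_{ab}|\le \pi(a\to b)+\pi(b\to a)$ one gets $\sum_e |e|^q|J_e|\le W_q^q$. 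The crux is to replace $J$, whose support may carry $\sim 2n$ edges, by an \emph{acyclic} flow of no larger cost: I would take $J'$ to minimise the linear functional $f\mapsto\sum_{e}|e|^q|f_e|$ over all flows $f$ on $V_c$ with $\operatorname{div}f=\lambda$. Such a minimiser can be chosen acyclic, so its support is a forest with at most $|V_c|-1\le n+2N-1$ edges, while $\sum_e|e|^q|J'_e|\le\sum_e|e|^q|J_e|\le W_q^q$ because $J$ is feasible. Orienting each edge by the sign of $J'_e$ and setting $m_e=|J'_e|$ exhibits $J'$ as a weighted graph in $\Gamma(\lambda)$, so by \eqref{defgraph} and Hölder's inequality with exponents $q,\ q/(q-1)$,
\[
\widehat{W}^{(q)}(\lambda)\le \sum_e|e||J'_e|^{1/q}\le |E(J')|^{\,1-1/q}\Big(\sum_e|e|^q|J'_e|\Big)^{1/q}\le (n+2N-1)^{1-1/q}\,W_q .
\]

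I expect this last reduction to be the main obstacle. A naïve count of the transport edges of $\pi$ only gives $\sim 2n$, and hence the non-sharp constant $2^{1-1/q}$ in the limit; it is precisely the passage to the net flow and then to the cost-minimising acyclic flow $J'$ that simultaneously forces the edge count down to $n+O(1)$ and keeps $\sum_e|e|^q|J'_e|\le W_q^q$, thereby producing the sharp constant $1$. The remaining points to verify carefully are the lower-order estimate for the vertex-matching corrections in the upper bound, the existence of an acyclic minimiser of $\sum_e|e|^q|f_e|$, and the identification of such a flow with an admissible element of $\Gamma(\lambda)$ at the sources and sinks.
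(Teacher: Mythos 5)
Your upper bound coincides with the paper's own proof: the same placement of $n_e$ equally spaced atoms of mass $m_e$ on each edge of a (near-)optimal graph, the same per-edge cost $m_e|e|^q n_e^{1-q}$, and the same optimization of the allocation giving $n_e\propto m_e^{1/q}|e|$ (the paper does this via a Lagrange multiplier in (\ref{min1})--(\ref{111}), and is just as informal as you are about the vertex-matching and integer-rounding corrections, which are indeed $O(n^{-q})$ against a main term of order $n^{1-q}$). Your lower bound, however, takes a genuinely different route. The paper argues through the structure of an \emph{optimal} plan: it first establishes that the infimum over $\ccn$ is attained by a regular plan (Lemma~\ref{overWlem}, resting on the regularization lemma and the compactness Lemma~\ref{compact}), then proves by a combinatorial chain-counting argument (Lemma~\ref{lemreg}) that the associated graph is cycle-free with $|E(\hat{\gamma})|\leq n+2N^3$, and finally applies H\"{o}lder exactly as you do in (\ref{ineq1}). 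You instead bound $W_q(\mu+\lambda^+,\mu+\lambda^-)$ from below for \emph{every} $\mu\in\ccn$: passing from an optimal coupling to the net flow $J$ with $\operatorname{div}J=\lambda$ plays the role of the paper's cycle-cancelling step (a) of Definition~\ref{regular}, and replacing $J$ by a basic optimal solution $J'$ of the min-cost flow program --- whose support, being a set of linearly independent incidence columns, is a forest with at most $n+2N-1$ edges --- replaces the paper's regularity and chain machinery wholesale. This buys you three things: no need for existence of a minimizing $(Z,\gamma)$, a quantitative per-$n$ bound with the sharper constant $(n/(n+2N-1))^{1-1/q}$ in place of the paper's $n+2N^3$, and a reduction of the combinatorics to standard LP facts (you should still record that the flow polyhedron is pointed and the cost nonnegative, so a basic optimal solution exists). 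The one wrinkle --- which you flag yourself --- is admissibility: Definition~\ref{gammal}(b) literally demands $\sum_{\{e,\,x_i\in\partial^+e\}}m_e=m_i$ at each source, whereas your min-cost flow may route mass \emph{through} an $x_i$ or $y_j$ (for $q>1$ intermediate stops strictly lower cost, so this genuinely occurs); the paper's plans avoid this automatically because in (\ref{linpro}) the source rows never receive and the sink columns never send. Since $\widehat{W}^{(q)}$ is an infimum, the fix is routine --- split such a vertex into a source plus an $\eps$-shifted transshipment vertex at $o(1)$ extra cost as $\eps\to 0$, or read conditions (b)--(c) as divergence constraints in the spirit of [X] --- so this is a repairable technicality, not a flaw in the strategy.
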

The set $\ccn$ is, evidently, not a compact one. Still we claim
\begin{lemma}\label{overWlem} For each $n\in \mathbb{N}$, a minimizer
 $\mu_n\in\ccn$
 \be\label{overW}\overline{W}^{(n)}_q(\lambda^+, \lambda^-):= \inf_{\mu\in\ccn}W_q\left(\mu+\lambda^+, \mu+\lambda^-\right)=W_q\left(\mu_n+\lambda^+, \mu_n+\lambda^-\right) \ee  exists.
 \end{lemma}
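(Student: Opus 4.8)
The plan is to apply the direct method of the calculus of variations, the only difficulty being that $\ccn$ is not compact: along a minimizing sequence both the masses $\alpha_j$ and the positions $z_j$ may run off to infinity. I would show that neither phenomenon can lower the cost, so that the infimum is unchanged when restricted to a compact subfamily on which $W_q$ attains its minimum. Throughout, write $M_0:=\int d\lambda^+=\int d\lambda^-=\sum_1^N m_i$.

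First I would control the positions. Let $C:=\mathrm{conv}\{x_1,\dots,y_N\}$, a compact convex polytope containing $\mathrm{supp}(\lambda^+)\cup\mathrm{supp}(\lambda^-)$, and let $P:\R^k\to C$ be the nearest-point projection. Since $P$ is $1$-Lipschitz and satisfies $|P(z)-c|\le|z-c|$ for every $c\in C$, replacing each atom $z_j$ of $\mu$ by $P(z_j)$ does not increase the length of any edge of an optimal plan — the terminals $x_i,y_i$ are fixed points of $P$ — and hence does not increase $W_q(\mu+\lambda^+,\mu+\lambda^-)$. Thus in computing the infimum I may assume every $z_j\in C$.

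Next, the masses. Fix such a $\mu$ and let $\pi$ be an optimal (finite, linear-programming) plan between $\mu+\lambda^+$ and $\mu+\lambda^-$. I read $\pi$ as a flow on the atoms: the $x_i$ are pure sources of total mass $M_0$, the $y_i$ pure sinks, and each hub $z_j$ a transshipment node whose in- and out-flow both equal its mass $\alpha_j$. A directed cycle in this flow can only pass through hubs, since the $x_i$ receive nothing and the $y_i$ emit nothing; cancelling the common flow around such a cycle preserves all marginals while strictly lowering the transport cost, so for an optimal $\pi$ the hub flow is acyclic. Removing the cost-free self-loops $\pi(z_j\to z_j)$ and decomposing the remaining acyclic flow into simple source-to-sink paths of total mass $M_0$, the throughput of each hub — which after self-loop removal equals its mass — is the sum of the path-flows through it, hence at most $M_0$. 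This yields $\mu^\sharp\in\ccn$ with $0\le\alpha_j\le M_0$ and $W_q(\mu^\sharp+\lambda^+,\mu^\sharp+\lambda^-)\le W_q(\mu+\lambda^+,\mu+\lambda^-)$.

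Combining the two reductions, $\overline{W}^{(n)}_q(\lambda^+,\lambda^-)$ equals the infimum of $W_q(\mu+\lambda^+,\mu+\lambda^-)$ over the compact parameter set $\{(\alpha_j,z_j)\}\in[0,M_0]^n\times C^n$. A minimizing sequence then has a subsequence along which $\alpha_j^{(l)}\to\alpha_j$ and $z_j^{(l)}\to z_j$, so that $\mu^{(l)}\weakly\mu_n:=\sum_j\alpha_j\delta_{z_j}\in\ccn$ and $\mu^{(l)}+\lambda^\pm\weakly\mu_n+\lambda^\pm$; as all measures are supported in the fixed compact set $C\cup\{x_i,y_i\}$ with convergent equal masses, weak lower semicontinuity of $W_q$ gives $W_q(\mu_n+\lambda^+,\mu_n+\lambda^-)\le\liminf_l W_q(\mu^{(l)}+\lambda^+,\mu^{(l)}+\lambda^-)=\overline{W}^{(n)}_q$, while feasibility of $\mu_n$ gives the reverse inequality. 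Hence $\mu_n$ is the desired minimizer. I expect the main obstacle to be the mass bound: showing that enlarging an atom beyond the total transported mass $M_0$ cannot help, for which the acyclicity-plus-path-decomposition argument above appears to be essential.
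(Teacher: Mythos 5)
Your proof is correct, and it reaches the conclusion by a genuinely different route than the paper. The paper first rewrites $\overline{W}^{(n)}_q$ as a finite linear program over flows $\gamma\in\Gamma(n,\lambda^+,\lambda^-)$ with conservation at the hubs, introduces \emph{regular} plans (Definition~\ref{regular}), shows every plan can be replaced by a regular one of no larger cost, and then proves compactness of regular plans via a chain-following argument that yields only the exponential mass bound $Q\leq M(n+N)^{n-1}$; positions are then handled by asserting that $Z\mapsto\min_\gamma F_q(Z,\gamma)$ is continuous and coercive on $(\R^k)^n$. You instead work directly with an optimal Wasserstein coupling of $\mu+\lambda^+$ and $\mu+\lambda^-$ --- which, read as a transshipment flow, is exactly the paper's $\Gamma(n,\lambda^+,\lambda^-)$ in disguise --- and you tame the mass degeneracy by cycle cancellation plus the flow-decomposition theorem, obtaining the sharp linear bound $\alpha_j\leq M_0$ rather than the paper's exponential one; your cycle cancellation is essentially the paper's Lemma on regularization, part (a), but you never need its part (b) (uniqueness of chains), which the paper only exploits later for the tree structure in Theorem~\ref{th2}. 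Your treatment of positions by the $1$-Lipschitz projection onto $\mathrm{conv}\{x_1,\dots,y_N\}$ is actually more careful than the paper's coercivity claim, which is strictly false when some atoms carry zero mass (such atoms can be placed anywhere at no cost); the projection sidesteps this entirely. The trade-off is that the paper's regular-plan machinery, though heavier here, does double duty in the rest of the paper, whereas your argument is purpose-built for the existence statement.

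Two small points of wording, both repairable in one line: cancelling a cycle by itself does \emph{not} preserve the marginals of the coupling (it lowers the mass of $\mu$ at the cycle's hubs) --- to literally preserve them you should add the cancelled mass back as zero-cost self-loops $\pi(z_j\to z_j)$ at each hub of the cycle, which is consistent with your later self-loop removal; and since distinct atoms may occupy coincident positions, a cycle can have zero cost, so you should conclude that \emph{some} optimal plan has acyclic hub flow (pass to the cancelled plan), not that every optimal plan does. Neither affects the construction of $\mu^\sharp$ or the compactness argument that follows.
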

 \begin{remark}
 Note that $\overline{W}^{(n)}_q$ depends on each of the component $\lambda^\pm$ while the limit $\widehat{W}^{(q)}=\lim_{n\rightarrow\infty}  n^{1-1/q} \overline{W}^{(n)}_q$ depends only on the difference $\lambda=\lambda^+-\lambda^-$.
 \end{remark}
\begin{theorem}\label{th2}
Let $\mu_n$ be a regular\footnote{see Definition~\ref{regular}} minimizer of
$W_q\left(\mu+\lambda^+, \mu+\lambda^-\right)$
in $\ccn$.
Then   the associated optimal plan spans a reduced weighted tree\footnote{ See Definitions~\ref{defgraphdef} and \ref{defred}} $(\hat{\gamma}_n, m_n)$
 which converges (in Hausdorff metric) to an optimal graph $(\hat{\gamma},m)\in \Gamma(\lambda)$ of (\ref{defgraph}) as $n\rightarrow\infty$, \end{theorem}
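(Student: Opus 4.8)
The plan is to read off, from the optimal Wasserstein plan attached to a regular minimizer $\mu_n$, a weighted oriented graph whose reduction is $(\hat{\gamma}_n,m_n)$, and then to prove simultaneously that (i) each $(\hat{\gamma}_n,m_n)$ is an admissible tree in $\Gamma(\lambda)$, (ii) the reduced trees have complexity bounded independently of $n$, and (iii) their network cost $\sum_e |e|\,m_{n,e}^{1/q}$ tends to $\widehat{W}^{(q)}(\lambda)$. First I would extract the graph. Since the atoms of $\mu_n$ enter both marginals with equal weight, in any optimal plan their contribution to the \emph{net} flow is divergence-free, while at $x_i$ and $y_j$ the net flow has prescribed divergences $m_i$ and $-m^*_j$. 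Taking the support of the plan as edges and the transported masses as weights, conditions (a)--(c) of \rdef{gammal} are exactly the resulting conservation laws, so $(\hat{\gamma}_n,m_n)\in\Gamma(\lambda)$. That the reduced graph is a \emph{tree} follows from minimality: a circulation around any cycle of the support leaves all divergences unchanged and alters the ($\gamma$-linear) transport cost monotonically, so it can be pushed until an edge is voided, contradicting optimality of the plan (regularity excludes the degenerate, cost-neutral case and makes the extraction and the merging of collinear degree-two relay chains unambiguous). Moreover, optimality of $\mu_n$ forces each maximal chain of degree-two relays to be straight and equally spaced, which both legitimizes the reduction and pins down the edgewise cost used below.

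The decisive structural observation is that the reduced trees have combinatorial complexity bounded uniformly in $n$. A relay atom has zero net divergence, so it cannot be a leaf (a leaf meets a single edge, which would then carry zero mass and be deleted); hence every leaf is one of the at most $2N$ terminals $\{x_i,y_j\}$. A tree with at most $2N$ leaves has at most $2N-2$ branch vertices and at most $4N-3$ edges. Confining the atoms of an optimal $\mu_n$ to the convex hull $K$ of the terminals (projecting a vertex into $K$ is $1$-Lipschitz and never raises the cost) and passing to a subsequence along which the combinatorial type is constant, the finitely many vertex positions converge in the compact set $K$, each edge converges to a segment, and each edge mass $m_{n,e}$ converges. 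This yields a limiting finite weighted tree $(\hat{\gamma},m)\in\Gamma(\lambda)$ with $\hat{\gamma}_n\to\hat{\gamma}$ in Hausdorff metric, and the functional $\sum_e|e|\,m_e^{1/q}$ passes to the limit by continuity on this fixed-type, finite-dimensional family.

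It remains to show $(\hat{\gamma},m)$ is optimal. The lower bound is free: $(\hat{\gamma}_n,m_n)\in\Gamma(\lambda)$ gives $\sum_e|e|\,m_{n,e}^{1/q}\ge\widehat{W}^{(q)}(\lambda)$ for every $n$. For the matching upper bound I compare edge by edge. If edge $e$ carries mass $m_{n,e}$ across length $|e|$ through $k_e$ interior relays, then equal spacing being optimal for the $q$-cost along a segment, its contribution to $\big(\overline{W}^{(n)}_q\big)^q$ is at least $m_{n,e}|e|^q(k_e+1)^{-(q-1)}$; minimizing $\sum_e m_{n,e}|e|^q(k_e+1)^{-(q-1)}$ under $\sum_e(k_e+1)=n+E_n$, with $E_n\le 4N-3$, gives by Hölder
$$\big(\overline{W}^{(n)}_q\big)^q\ \ge\ (n+E_n)^{-(q-1)}\Big(\sum_e|e|\,m_{n,e}^{1/q}\Big)^q ,$$
so that $n^{1-1/q}\overline{W}^{(n)}_q\ge\big(n/(n+E_n)\big)^{(q-1)/q}\sum_e|e|\,m_{n,e}^{1/q}$. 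Since $E_n$ is bounded the prefactor tends to $1$, and \rth{th1} gives $n^{1-1/q}\overline{W}^{(n)}_q\to\widehat{W}^{(q)}(\lambda)$; hence $\sum_e|e|\,m_{n,e}^{1/q}\to\widehat{W}^{(q)}(\lambda)$. Along the convergent subsequence, continuity of the network cost then forces $\sum_e|e|\,m_e^{1/q}=\widehat{W}^{(q)}(\lambda)$, so $(\hat{\gamma},m)$ is optimal; as every subsequence admits such a limit, the full sequence converges (to the unique optimizer when uniqueness holds).

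The main obstacle is the second step: securing the \emph{uniform} bound on the number of branch points and checking that the Hausdorff limit carries a well-defined weight function making it a genuine element of $\Gamma(\lambda)$, together with the relay bookkeeping (the replacement of $n$ by $n+E_n$) that makes the edgewise lower bound converge exactly to the network cost. Everything downstream of the complexity bound and the Hölder estimate is then continuity and a sandwich argument.
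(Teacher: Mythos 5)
Your proposal is correct, and its overall architecture coincides with the paper's: extract the reduced weighted tree from the optimal regular plan, establish a complexity bound uniform in $n$, pass to a Hausdorff-convergent subsequence with converging weights, and squeeze the network cost between the trivial lower bound $\widehat{W}^{(q)}(\lambda)$ (admissibility of each $(\hat{\gamma}_n,m_n)$) and the upper bound obtained from a H\"older estimate combined with Theorem~\ref{th1}; your displayed inequality with the prefactor $\left(n/(n+E_n)\right)^{(q-1)/q}$ is precisely the paper's chain (\ref{ineq1})--(\ref{ineqWq})--(\ref{lim33}) with $|E(\hat{\gamma}^n)|\leq n+2N^3$, merely bookkept on the reduced rather than the non-reduced graph. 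Where you genuinely diverge is the key complexity lemma: the paper's Lemma~\ref{lemreg} counts chains using regularity condition (b) of Definition~\ref{regular} (at most one chain per pair $(x_i,y_j)$, at most $2N$ chains crossing a given one), yielding at most $2N^3$ vertices for the reduced tree, whereas you observe that a relay atom is divergence-free and so cannot be a leaf, hence all leaves lie among the $2N$ terminals, and the standard identity $\sum_v(\deg v-2)=-2$ on a tree gives at most $2N-2$ branch points and $O(N)$ edges. Your bound is sharper and more elementary, and it needs only acyclicity (which you, like the paper, derive from regularity/cycle cancellation) rather than the finer chain-uniqueness property; the one caveat is that the reduced graph may a priori be a forest, so the leaf count should be run componentwise, which changes nothing essential. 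You are also more explicit than the paper on two points: you actually prove Corollary~\ref{redcor}(v) (confinement to the convex hull of the terminals via the $1$-Lipschitz nearest-point projection), which the paper dismisses as evident, and you correctly flag that the argument delivers only subsequential convergence --- every subsequential limit is an optimal graph --- so the full-sequence convergence asserted in the theorem statement requires uniqueness of the optimizer, a gap your hedge acknowledges and the paper's own proof silently shares.
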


\section{Auxiliary results}
We first reformulate  $\overline{W}^{(n)}_q$, as given by (\ref{overW}), in terms of a linear programming:
\par
  Given $q>1$, $n\in \mathbb{N}$,
 $Z=(z_{N+1}, \ldots z_{N+n})\in (\R^k)^n$, $\lambda=\lambda^+-\lambda^-$ as given by (\ref{lambda}) and  $\gamma:= \{\gamma_{i,j} \ 1\leq i,j \leq n+N\}\in \Gamma(n,\lambda^+, \lambda^-):=$
 \begin{multline}\label{linpro} \left\{ \gamma_{i,j}\geq 0 \ , \ \ \ 1\leq k\leq N  \Longrightarrow\sum_{i=1}^{n+N} \gamma_{k,i}= m_k, \ \ \ \ \sum_{i=1}^{n+N} \gamma_{i,k}= m^*_k \ \ \right.
 \\
 \left.
\sum_{i=1}^{n+N}\gamma_{i,j}=\sum_{i=1}^{n+N}\gamma_{j,i}  \ \text{for any} \ \ N+1\leq j \leq n+N  \right\} \ , \end{multline}
Let
\be\label{Fq} F_q(Z, \gamma):= \sum_{1}^{n+N}\sum_{1}^{n+N} \gamma_{i,j}F_{i,j}(Z)\ee where

\be\label{Fqq} F_{i,j}(Z):=\left\{
\begin{array}{cc}
  |z_i-z_j|^q & N+1\leq i,j\leq n+N  \\
  |x_i-z_j|^q  & 1\leq i\leq N, N+1\leq j\leq n+N \\
    |z_i-y_j|^q & 1\leq j\leq N, N+1\leq i\leq n+N  \\
  |x_i-y_j|^q & 1\leq i,j\leq N
\end{array} \right.
\ee

We observe
\be\label{discon}\overline{W}^{(n)}_q(\lambda^+, \lambda^-)\equiv \inf_{Z\in (\R^k)^n, \gamma\in\Gamma(n,\lambda^+, \lambda^-)} F_q(Z, \gamma)  \ . \ee

Our first object is to prove Lemma~\ref{overWlem}, that is, to replace the "$\inf$" in (\ref{discon}) by "$\min$".

\begin{definition}\label{regular}
 $\gamma\in \Gamma(n,\lambda^+, \lambda^-)$
 is called a {\em regular plan}
if it satisfies the following for any $1\leq i,j\leq n+N$:
\begin{description}
\item{(a)} if $k\geq 1$ and $i_1=i, i_2, \ldots i_k=i$ then
$\Pi_{l=1}^{k-1}\gamma_{i_l, i_{l+1}}=0$. (In particular
$\gamma_{i,j}\gamma_{j,i}=0$ and  $\gamma_{i,i}=0$).
\item{(b)} If $k>1$, $k^{'}\geq 1$ and $\{i_1=i, i_2, \ldots i_k=j\}\not\equiv \{i^{'}_1=i, i^{'}_2, \ldots i^{'}_{k^{'}}=j\}$
then  \\ $\left(\Pi_{l=1}^{k-1}\gamma_{i_l, i_{l+1}}\right)\left(\Pi_{l=1}^{k^{'}-1}
\gamma_{i^{'}_l, i^{'}_{l+1}}\right)=0$.
   \end{description}
If $\gamma$ is a regular plan, then $\mu\in \ccn$ is called a {\em regular measure} if for each $i\in\{N+1,\ldots n+N\}$ there exists $z_i\in\R^k$ where $\mu(\{z_i\})=\sum_{j=1}^{n+N}\gamma_{i,j}$.
\end{definition}

\begin{lemma}
For each $Z\in (\R^k)^n$ and any plan $\gamma\in\Gamma(n,\lambda^+, \lambda^-)$ there exists a regular  plan $\gamma^r\in \Gamma(n,\lambda^+, \lambda^-)$ satisfying $F_q(Z,\gamma^r; )\leq F_q(Z,\gamma)$.
\end{lemma}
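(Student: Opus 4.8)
The plan is to exploit that, for a fixed configuration $Z$, the functional $F_q(Z,\cdot)$ in \eqref{Fq} is \emph{linear} in $\gamma$ and that $\Gamma(n,\lambda^+,\lambda^-)$ is a polyhedron cut out by the marginal equalities (rows and columns indexed by $1\le k\le N$), the conservation equalities (indices $N+1\le j\le n+N$), and $\gamma_{i,j}\ge 0$. I read $\gamma$ as a flow on the directed graph whose vertices are the sources $x_1,\dots,x_N$, the sinks $y_1,\dots,y_N$ and the intermediate points $z_{N+1},\dots,z_{n+N}$, with an edge $e=(i,j)$ of weight $\gamma_{i,j}$ whenever $\gamma_{i,j}>0$; the constraints \eqref{linpro} say precisely that this is a feasible flow with prescribed out-flow $m_k$ at $x_k$, prescribed in-flow $m^*_k$ at $y_k$, and conservation at every $z_j$. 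In this picture the regularity conditions (a)--(b) assert that the support digraph carries no nontrivial closed walk and at most one directed route between any ordered pair of vertices it joins. For an edge $e=(i,j)$ I write $F_e(Z):=F_{i,j}(Z)$ and $\gamma_e:=\gamma_{i,j}$. The strategy is to destroy the failures of (a) and (b) one at a time by redistributing mass along a circulation, so that each step keeps $\gamma$ feasible, does not increase $F_q(Z,\gamma)$, and strictly lowers the number of positive entries $\gamma_{i,j}$.

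First I would remove the failures of (a). Such a failure is a nontrivial directed closed walk in the support; since $x_k$ carries no incoming and $y_k$ no outgoing edge, such a walk never meets the boundary indices and therefore runs through intermediate vertices only. Extracting a simple directed cycle $C$, I subtract a common amount $t$ from $\gamma_{i,j}$ along every edge of $C$. Each intermediate vertex on $C$ loses $t$ of in-flow and $t$ of out-flow, so \eqref{linpro} is preserved, while the cost changes by $-t\sum_{e\in C}F_e(Z)\le 0$; taking $t=\min_{e\in C}\gamma_e$ annihilates at least one edge and adds none.

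Next I would remove the failures of (b). Such a failure is a pair of distinct simple directed walks $P_1,P_2$ from a vertex $i$ to a vertex $j$ (a nonsimple walk already contains a cycle, treated above). I take the circulation $\eta$ equal to $+1$ on the edges of $P_2$ and $-1$ on the edges of $P_1$. Because both walks begin at $i$ and end at $j$, and because a source or sink can occur only as an endpoint of a walk, $\eta$ annihilates every marginal and every conservation equality of \eqref{linpro}; hence $\gamma+t\eta$ stays feasible as long as it remains nonnegative. The cost $F_q(Z,\gamma+t\eta)$ is affine in $t$ with slope $\sum_{e\in P_2}F_e(Z)-\sum_{e\in P_1}F_e(Z)$, so at least one of the two admissible directions does not increase it; pushing $t$ in that direction until the first edge of the decreasing walk reaches $0$ kills a positive entry, creates none, and does not raise the cost.

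Each of these two moves returns an element of $\Gamma(n,\lambda^+,\lambda^-)$ with $F_q$ no larger and with strictly fewer positive entries. Since the support is finite, after finitely many moves no failure of (a) or (b) remains, which is the sought regular plan $\gamma^r$ with $F_q(Z,\gamma^r)\le F_q(Z,\gamma)$. I expect the crux to be the verification in the (b)-step that the single circulation $\eta$ respects \emph{all} the equalities of \eqref{linpro} at once --- this rests on the two walks sharing both endpoints and on sources and sinks appearing only as walk-endpoints --- together with the argument that a cost-non-increasing feasible direction always exists and eliminates an edge, so that the monovariant ``number of positive $\gamma_{i,j}$'' strictly decreases at every step and the procedure terminates.
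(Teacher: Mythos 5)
Your proposal is correct and follows essentially the same route as the paper: condition (a) is restored by subtracting the minimal weight along a directed cycle, and condition (b) by shifting mass between the two parallel chains in the cost-non-increasing direction until an edge of the losing chain vanishes, exactly as in the paper's two-step construction of $\gamma^{r_1}$ and $\gamma^r$. Your flow/circulation framing and the explicit termination argument (the strictly decreasing number of positive entries $\gamma_{i,j}$) merely make rigorous what the paper leaves implicit, so this counts as the same proof.
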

\begin{proof}
\begin{description}
\item{a)} Assume $\Pi_{l=1}^{k-1}\gamma_{i_l, i_{l+1}}>0$. Let $i_{l_0}$ such that $\gamma_{i_{l_0}, i_{l_0}+1}\leq\gamma_{i_l, i_{l}+1}$ for any $1\leq l<k$. Then
     $\gamma^{r_1}_{i_l, i_{l}+1}:= \gamma_{i_l, i_{l}+1}-\gamma_{i_{l_0}, i_{l_0}+1}$ while
     $\gamma^{r_1}_{i,j}=\gamma_{i,j}$ otherwise. It follows that $\gamma^{r_1}\in\Gamma(n,\lambda^+, \lambda^-)$ and $F_q(Z,\gamma^{r_1})\leq F_q(Z,\gamma)$. Thus $\gamma^{r_1}$ verifies (a).
     \item{b)} We  may assume that $\{i_2, \ldots i_{k-1}\}\cap \{ i^{'}_2, \ldots i^{'}_{k^{'}-1}\}=\emptyset$ for, otherwise, choose 2 pairs of indices $i_l=i^{'}_{l^{'}}$ and $i_m=i^{'}_{m^{'}}$ for which 
         $\{i_{l+1}, \ldots i_{m-1}\}\cap \{ i^{'}_{l^{'}+1}, \ldots i^{'}_{m^{'}-1}\}=\emptyset$. 
         
         Assume $\left(\Pi_{l=1}^{k-1}\gamma^{r_1}_{i_l, i_{l+1}}\right)\left(\Pi_{l=1}^{k^{'}-1}
\gamma^{r_1}_{i^{'}_l, i^{'}_{l+1}}\right)>0$. Assume (with no limitation to generality) that $\sum_{l=1}^{k-1}|Z_{i_l}-Z_{i_{l+1}}|^{1/q} \geq \sum_{l=1}^{k^{'}-1}|Z_{i^{'}_l}-Z_{i^{'}_{l+1}}|^{1/q}$.  Let $i_{l_0}$ such that $\gamma^{r_1}_{i_{l_0}, i_{l_0}+1}\leq\gamma^{r_1}_{i_l, i_{l}+1}$ for any $1\leq l<k$. Then set
$$ \gamma^{r}_{i^{'}_l, i^{'}_{l+1}}:= \gamma^{r_1}_{i^{'}_l, i^{'}_{l+1}}+\gamma^{r_1}_{i_{l_0}, i_{l_0}+1} $$
$$ \gamma^{r}_{i_l, i_{l+1}}:= \gamma^{r_1}_{i_l, i_{l+1}}-\gamma^{r_1}_{i_{l_0}, i_{l_0}+1} $$
while $\gamma^{r}_{i, j}= \gamma^{r_1}_{i, j}$ otherwise. Then $\gamma^{r}$ verifies (\ref{linpro}) while
\begin{multline}F_q(Z,\gamma^{r})= F_q(Z,\gamma^{r_1})- \gamma^{r_1}_{i_{l_0}, i_{l_0}+1}\left[ \sum_{l=1}^{k-1}|Z_{i_l}-Z_{i_{l+1}}|^{1/q}- \sum_{l=1}^{k^{'}-1}|Z_{i^{'}_l}-Z_{i^{'}_{l+1}}|^{1/q}\right] \\ \leq F_q(Z,\gamma^{r_1})   \leq F_q(Z,\gamma)  \end{multline}

\end{description}

\end{proof}
  \begin{lemma}\label{compact} The set of regular plans in $\ccn$ associated with $\Gamma(n, \lambda^+, \lambda^-)$ (\ref{Fq}) is compact.
  \end{lemma}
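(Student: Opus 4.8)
The plan is to obtain the compactness from three ingredients: the boundedness and closedness of the feasible polytope $\Gamma(n,\lambda^+,\lambda^-)$, the closedness of the regularity conditions of Definition~\ref{regular}, and a confinement of the atom positions $Z$ to a fixed compact set. First I would record that $\Gamma(n,\lambda^+,\lambda^-)$ is compact: the relations in (\ref{linpro}) are linear equalities together with the inequalities $\gamma_{i,j}\ge 0$, so the set is closed, and the marginal constraints $\sum_i\gamma_{k,i}=m_k$, $\sum_i\gamma_{i,k}=m^*_k$ force $0\le\gamma_{i,j}\le M:=\sum_1^N m_i$; thus $\Gamma(n,\lambda^+,\lambda^-)$ is a closed bounded subset of $\R^{(n+N)^2}$, hence compact. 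Next I would argue that the regular plans form a closed subset. Each condition in Definition~\ref{regular}(a),(b) asserts the vanishing of a finite product of the continuous coordinate functions $\gamma\mapsto\gamma_{i_l,i_{l+1}}$, and since any walk of length exceeding $n+N$ repeats a vertex it suffices to impose these conditions for the finitely many walks visiting each vertex at most once, the remaining ones being implied (a repeated vertex produces a cycle whose edge-product already vanishes by (a)). The regular plans are therefore the common zero set of finitely many continuous functions inside the compact set $\Gamma(n,\lambda^+,\lambda^-)$, hence closed and compact.

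The only remaining obstruction is that the positions $Z=(z_{N+1},\dots,z_{N+n})$ range a priori over the non-compact space $(\R^k)^n$, and this is where the main work lies. Atoms $z_i$ carrying no mass, i.e. with $\sum_j\gamma_{i,j}=0$, contribute neither to the associated measure $\mu$ nor to $F_q$, so their positions are immaterial and may be fixed arbitrarily. For the atoms carrying positive mass I would confine the positions to the (compact) convex hull $C$ of the data $\{x_1,\dots,x_N,y_1,\dots,y_N\}$. Let $P\colon\R^k\to C$ be the nearest-point projection; it is $1$-Lipschitz and fixes every $x_i$ and $y_j$. Replacing each $z_i$ by $P(z_i)$ leaves $\gamma$ unchanged, so feasibility and regularity are preserved, while every term $F_{i,j}(Z)$ in (\ref{Fqq}) can only decrease: $|P(z_i)-P(z_j)|\le|z_i-z_j|$, and for the mixed terms $|x_i-P(z_j)|=|P(x_i)-P(z_j)|\le|x_i-z_j|$ (similarly with a fixed $y_j$), the inequalities surviving the increasing map $t\mapsto t^q$. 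Hence $F_q(P(Z),\gamma)\le F_q(Z,\gamma)$, so for the purpose of the infimum (\ref{discon}) nothing is lost by restricting to $Z\in C^n$.

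With this confinement the set of regular plans lies in the product $C^n\times\{\gamma\in\Gamma(n,\lambda^+,\lambda^-):\gamma\ \text{regular}\}$ of two compact sets, and the assignment $(Z,\gamma)\mapsto\mu$ with $\mu(\{z_i\})=\sum_j\gamma_{i,j}$ is continuous into the space of positive measures on $C$ with the weak-$*$ topology; the continuous image of a compact set is compact, giving the claim. I expect the position confinement to be the delicate step: one must verify that the simultaneous projection $Z\mapsto P(Z)$ does not increase $F_q$ and does not disturb the combinatorial constraints. The crucial point is that $P$ is globally $1$-Lipschitz, so \emph{all} pairwise distances among the projected points shrink at once; moving the atoms one at a time would fail precisely when an atom's neighbour is itself another, as yet unprojected, atom.
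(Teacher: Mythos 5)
There is a genuine gap, and it sits at the very first step of your argument: the polytope $\Gamma(n,\lambda^+,\lambda^-)$ is \emph{not} bounded, hence not compact. The marginal constraints in (\ref{linpro}) control only the first $N$ row sums and the first $N$ column sums; the internal indices $N+1\le j\le n+N$ are subject only to the balance conditions $\sum_i\gamma_{i,j}=\sum_i\gamma_{j,i}$, which tolerate arbitrarily large circulation. For instance, taking two internal indices $j,j'$ and setting $\gamma_{j,j'}=\gamma_{j',j}=t$ (or a self-loop $\gamma_{j,j}=t$) satisfies all the constraints for every $t>0$ without affecting any marginal, so your claimed bound $0\le\gamma_{i,j}\le\sum_1^N m_i$ holds only for entries with $i\le N$ or $j\le N$. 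The boundedness of the set of \emph{regular} plans is true, but it is exactly the nontrivial content of the lemma, and regularity must enter quantitatively, not merely as the closed condition you use. This is what the paper's proof supplies: if an atom carries mass $Q=\mu(\{z_{i_1}\})$, some entry satisfies $\gamma_{i_1,i_2}\ge Q/(n+N)$; by the balance conditions $\mu(\{z_{i_2}\})\ge Q/(n+N)$, and iterating yields a chain with $\mu(\{z_{i_l}\})\ge Q/(n+N)^{l-1}$ along which every $\gamma_{i_l,i_{l+1}}$ is positive. Condition (a) of Definition~\ref{regular} (no positive product along a closed walk) forbids the chain from revisiting an index, so after at most $n$ steps it must terminate at a sink $j\in\{1,\ldots,N\}$, whence $m^*_j\ge Q/(n+N)^{n-1}$ and $Q\le M(n+N)^{n-1}$ with $M=\max_{1\leq l\leq N}\max\{m_l,m^*_l\}$. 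Your proposal contains no substitute for this chain argument, and since everything downstream (closedness inside a compact set, continuous image of a compact set) presupposes the false boundedness of $\Gamma(n,\lambda^+,\lambda^-)$, the proof collapses.

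The salvageable parts are real but peripheral. Your closedness observation is correct: regularity is cut out by finitely many polynomial equations once one reduces, as you do, to walks without repeated vertices. Your projection of the positions onto the convex hull of $\{x_1,\ldots,x_N,y_1,\ldots,y_N\}$ via the $1$-Lipschitz nearest-point map is also correct and is in fact a useful supplement the paper omits, since the lemma as stated ignores the non-compactness in $Z$ (the paper only ever invokes it for fixed $Z$, in Corollary~\ref{cor1}, to extract a minimizing $\gamma$). But neither step touches the unbounded-circulation problem in the weights, which is the heart of the matter; to repair the proof you must insert the a priori mass bound above (or an equivalent acyclicity-based flow-decomposition bound) before any compactness conclusion can be drawn.
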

  \begin{proof}
   Let $z_i$ be some point in the support of $\mu$ where $\mu(\{z_i\})=Q$. We show an a-priori bound on $Q$ (hence compactness). By (\ref{linpro}) there exists a point $z_{i_2}$ where $\gamma_{i, i_2}\geq Q/(N+n)$.  We can define such a chain $i=i_1, i_2, \ldots$ where $\gamma_{i_l, i_{l+1}}> \mu(\{z_{i_l}\})/(n+N)$. In particular it follows that $\mu(\{z_{i_l}\}) \geq Q/(n+N)^{l-1}$.
  By part (a) of the definition of regular plans, this chain must be of length ar most $n$. By (\ref{linpro}) it must  end at some $i_k:= j\in \{1, \ldots N\}$.     So, $\mu(\{ z_j\}) \geq   Q/(n+N)^{k-1} \geq  Q/(n+N)^{n-1}$. On the other hand, $\mu(\{z_j\}) \leq \max_{1\leq l\leq N} \max\{ m_l, m^*_l\}  := M$ so  $Q\leq M(n+N)^{n-1}$.
  \end{proof}
  \begin{corollary}\label{cor1}
  For  fixed $Z\in (\R^k)^n$, $\lambda$ satisfying (\ref{lambda}) and $q>1$,  the function $F_q$ admits a minimizer $\gamma\in \Gamma(n,\lambda^+, \lambda^-)$. Moreover, this minimizer is regular.
  \end{corollary}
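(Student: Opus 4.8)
The plan is to combine the two preceding lemmas---the reduction of an arbitrary plan to a regular one, and the compactness of the set of regular plans---with the elementary observation that, for fixed $Z$, the functional $\gamma \mapsto F_q(Z,\gamma)$ is linear and hence continuous in the finitely many variables $\gamma_{i,j}$.

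First I would note that $\Gamma(n,\lambda^+,\lambda^-)$ is nonempty: since $\sum_1^N m_i = \sum_1^N m_i^* > 0$, one may take any transport matrix $\{\gamma_{i,j}\}_{1\le i,j\le N}$ between $\lambda^+$ and $\lambda^-$ and set all remaining entries to zero; this clearly satisfies (\ref{linpro}). Next I would invoke the reduction lemma: for every $\gamma \in \Gamma(n,\lambda^+,\lambda^-)$ there is a regular plan $\gamma^r \in \Gamma(n,\lambda^+,\lambda^-)$ with $F_q(Z,\gamma^r) \le F_q(Z,\gamma)$. Since the regular plans form a subset of $\Gamma(n,\lambda^+,\lambda^-)$, this yields the identity of infima
$$ \inf_{\gamma\in\Gamma(n,\lambda^+,\lambda^-)} F_q(Z,\gamma) = \inf_{\gamma \ \text{regular}} F_q(Z,\gamma) , $$
and it also shows that the regular set is nonempty (apply the lemma to the plan constructed above).

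The minimizer is then produced on the regular side. By \rlemma{compact} the set of regular plans is compact, and for fixed $Z$ the map $\gamma \mapsto F_q(Z,\gamma) = \sum_{i,j}\gamma_{i,j}F_{i,j}(Z)$ is linear, hence continuous. A continuous function on a nonempty compact set attains its minimum, so some regular plan $\gamma^*$ realizes the right-hand infimum. By the displayed equality, $\gamma^*$ realizes the left-hand infimum as well, i.e.\ it minimizes $F_q(Z,\cdot)$ over all of $\Gamma(n,\lambda^+,\lambda^-)$. This $\gamma^*$ is the desired minimizer and it is regular by construction, which establishes both assertions.

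There is essentially no deep obstacle once the two lemmas are available; the single point that demands care is the logical bookkeeping showing that the infimum over all feasible plans genuinely coincides with the infimum over regular plans, so that attainment on the compact regular set transfers to attainment on the full set $\Gamma(n,\lambda^+,\lambda^-)$. The continuity and nonemptiness checks are routine, so the bulk of the work has already been carried out in the reduction lemma and in \rlemma{compact}.
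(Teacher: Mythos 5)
Your proof is correct and is essentially the argument the paper intends: the corollary is stated there as an immediate consequence of the reduction lemma (every plan is dominated by a regular one, so the two infima coincide) together with Lemma~\ref{compact} and the continuity of the linear map $\gamma\mapsto F_q(Z,\gamma)$ on the compact set of regular plans. Your explicit nonemptiness check and the bookkeeping transferring attainment from the regular set to all of $\Gamma(n,\lambda^+,\lambda^-)$ are exactly the (implicit) steps the paper relies on.
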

  \begin{proof} of lemma~\ref{overWlem}

  For a fixed  $\lambda$ satisfying (\ref{lambda}) and $q>1$ it follows from Corollary~\ref{cor1} that
  $$ \overline{F}_q(Z, \gamma):= \inf_{\gamma\in\Gamma(n,\lambda^+, \lambda^-)}F_q(Z,\gamma)
   = \min_{\gamma\in\Gamma(n,\lambda^+, \lambda^-)}F_q(Z,\gamma) \ . $$
  It is also evident that $\overline{F}_q$ is continuous and coercive on  $(\R^k)^n$ and that
 $\overline{W}^{(n)}_q(\lambda^+, \lambda^-)=\min_{Z\in (\R^k)^n}\overline{F}_q(Z,\lambda)$.
 In particular (\ref{discon})
   is attained at a pair $(Z,\gamma)$ where
  \be\label{mun} \mu_n:=\sum_{i,j=N+1}^{n+N}\gamma_{i,j}\delta_{z_i}\in \ccn\ee
  is a regular minimizer of (\ref{overW}).
 \end{proof}
Next we associate a weighted graph $(\hat{\gamma},m)$ with a transport plan $\gamma\in
\Gamma(n,\lambda^+, \lambda^-)$ and $Z\in (\R^k)^n$ as follows (see Fig 1)
\begin{definition}\label{defgraphdef}
Let $Z=\{ z_{N+1}, \ldots z_{N+n}\}\in (\R^k)^n$ and  $\gamma\in \Gamma(n,\lambda^+, \lambda^-)$.    The associated  weighed, directed graph $(\hat{\gamma},m)$ is defined as
\par\noindent
\begin{description}
 \item{(i)} $V(\hat{\gamma})=\{  x_1, \ldots y_N, z_{1}, \ldots z_{n}\}:= \{ \zeta_1, \ldots \zeta_{n+2N}\}$.
      \item{(ii)} $E(\hat{\gamma})$ is given by the set of segments
$e_{k, l}:= [\zeta_k, \zeta_l]$ for which $\gamma_{k,l}>0$, while $\partial e_{k, l}=\zeta_k-\zeta_l$.
\item{(iii)}  $m_{e_{k,l}}:= \gamma_{k.l}$.
\item{(iv)} For each  $z_i\in  V(\hat{\gamma})$, $deg(z_i):= \#\{ j;  \gamma_{i,j}+\gamma_{j,i}>0\}$.
\end{description}
\end{definition}
\begin{figure}
\includegraphics[height=5cm,width =20cm]{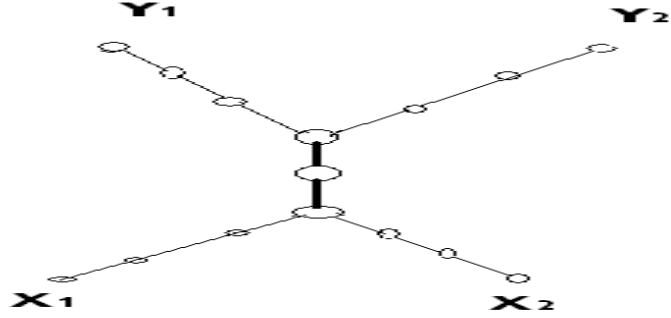}
\caption{A tree associated with a regular transport plan ($N=2$, $n=11$}
\end{figure}
\begin{lemma}\label{lemreg}
Let  $(Z,\gamma)$ as in Definition~\ref{defgraphdef} where $\gamma$ is a regular plan in $\Gamma(\lambda,n)$. Then the associated graph $(\hat{\gamma},m)$ contains no cycles. In addition,
$|E(\hat{\gamma})|\leq n+2N^3$.
\end{lemma}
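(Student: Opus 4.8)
The plan is to treat the two assertions separately: the acyclicity comes directly from part~(a) of Definition~\ref{regular}, while the bound on $|E(\hat\gamma)|$ comes from part~(b), which I read as the statement that between any ordered pair of indices there is at most one positive-weight directed path. For the first claim, a directed cycle in $(\hat\gamma,m)$ would be a closed sequence $\zeta_{i_1}\to\zeta_{i_2}\to\cdots\to\zeta_{i_k}=\zeta_{i_1}$ with every $\gamma_{i_l,i_{l+1}}>0$. By the row/column constraints in (\ref{linpro}) each source $x_i$ carries only outgoing edges and each sink $y_j$ only incoming ones, so no special vertex can lie on such a cycle; hence all $i_l\in\{N+1,\dots,n+N\}$ and $\prod_l\gamma_{i_l,i_{l+1}}>0$ contradicts part~(a). (I interpret ``no cycles'' as ``no directed cycles''; note that because the indices $1\le i\le N$ double as both $x_i$ and $y_i$, a path from $x_i$ to $y_i$ is itself a forbidden closed index-walk.)

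For the edge bound, the key structural step is: for each source $x_i$, the set $R_i$ of vertices reachable from $x_i$ along positive-weight edges, with the edges among them, is an out-arborescence rooted at $x_i$. Indeed, if some $v\in R_i$ had two incoming edges inside $R_i$, from $u_1\ne u_2$, then $x_i\to\cdots\to u_1\to v$ and $x_i\to\cdots\to u_2\to v$ would be two distinct directed paths, contradicting part~(b). Thus every $v\in R_i\setminus\{x_i\}$ has a unique parent, and since all out-edges of any $v\in R_i$ again land in $R_i$, the arborescence out-degree of $v$ equals its full out-degree $d^+(v)$.

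The leaves of $R_i$ are exactly the sinks: any intermediate $z$ reached from $x_i$ carries positive incoming flow, hence by the conservation constraint in (\ref{linpro}) positive outgoing flow, so $d^+(z)\ge1$. Therefore $R_i$ has at most $N$ leaves, and for an arborescence with $L_i$ leaves one has $\sum_{v\in R_i,\,d^+(v)\ge1}\bigl(d^+(v)-1\bigr)=L_i-1\le N-1$. Every vertex with $d^+\ge1$ (each active source and each $z$ in the support of $\mu$) is reachable from some source, since tracing an incoming edge backwards must, by acyclicity, terminate at a source; summing over the $N$ sources gives
\[
\sum_{v:\,d^+(v)\ge1}\bigl(d^+(v)-1\bigr)\ \le\ \sum_{i=1}^N (L_i-1)\ \le\ N(N-1).
\]
Since $|E(\hat\gamma)|=\sum_v d^+(v)=\sum_{v:\,d^+(v)\ge1}\bigl(d^+(v)-1\bigr)+\#\{v:d^+(v)\ge1\}$ and $\#\{v:d^+(v)\ge1\}\le N+n$, I obtain $|E(\hat\gamma)|\le n+N^2$, which is stronger than the asserted $n+2N^3$ because $N^2\le 2N^3$.

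The step I expect to be most delicate is justifying that $R_i$ is genuinely an arborescence, i.e. transferring the ``unique directed path'' property of~(b) from the level of index-walks in $\Gamma(n,\lambda^+,\lambda^-)$ to genuine paths in the geometric graph $\hat\gamma$. This is exactly where the double use of the indices $1\le i\le N$ must be handled: because those doubled vertices occur only as endpoints (sources emit, sinks absorb) and never in the interior of a path, distinct paths in $\hat\gamma$ do correspond to distinct index-walks, so~(b) applies verbatim. Once this identification is secured, the leaf count and the degree sum are routine.
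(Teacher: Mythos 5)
Your proof is correct, and for the edge bound it takes a genuinely different route from the paper's, yielding the sharper estimate $|E(\hat{\gamma})|\le n+N^2$. On acyclicity you and the author coincide: the paper also derives it straight from Definition~\ref{regular}(a), so your explicit decision to read ``no cycles'' as ``no directed cycles'' matches the paper's own usage (ruling out undirected cycles --- the tree structure suggested by the figures --- would additionally require part (b), a gap shared by the paper's one-line argument, not introduced by you). For the cardinality bound the paper argues via a chain decomposition: every vertex lies on a maximal chain $C_{i,j}$ from some $x_i$ to some $y_j$; part (b) gives at most one chain per ordered pair, hence at most $N^2$ chains; a crossing count bounds the chains meeting a fixed chain by $2N$, giving at most $2N^3$ crossings overall, hence $\sum_{v}(\deg(v)-2)\le 2N^3$ and the stated bound. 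You instead decompose into the $N$ out-arborescences $R_i$ rooted at the sources, use (b) to force a unique in-edge within each $R_i$, and count leaves: leaves are sinks, so $L_i\le N$ and $\sum_{v\in R_i,\,d^+(v)\ge 1}\bigl(d^+(v)-1\bigr)=L_i-1\le N-1$, whence $|E(\hat{\gamma})|=\sum_v d^+(v)\le N(N-1)+(N+n)=n+N^2$. Your treatment of the two delicate points is sound: the index doubling in \eqref{linpro} ($k\le N$ denotes $x_k$ as a row index and $y_k$ as a column index) is harmless precisely because those vertices occur only as endpoints of positive walks, and the potential overcount when a vertex lies in several $R_i$ is absorbed by the nonnegativity of $d^+(v)-1$. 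What the paper's route buys, and yours does not, is downstream infrastructure: the maximal chains are exactly what gets collapsed in the reduced graph of Definition~\ref{defred}, and the $2N^3$ crossing bound is what delivers the $n$-independent vertex count of Corollary~\ref{redcor}(ii); your arborescence count gives a better constant for Lemma~\ref{lemreg} itself but would need the chain analysis restored for the rest of the paper. One cosmetic correction: ``the leaves of $R_i$ are exactly the sinks'' should read ``every leaf of $R_i$ is a sink'' (not every sink need be reachable from $x_i$), which is all your count actually uses.
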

\begin{proof}
The result that the graph $\hat{\gamma}$ contains no cycles follows directly from Definition~\ref{regular}-a. \par
It follows that any vertex $v\in V(\hat{\gamma})$ must belong to a  chain
$C_{i, j}:= \z_1, \ldots \z_k$ where $k\leq n$,  $\z_1=x_i$ and $\z_k=y_j$. By Definition~\ref{regular}-b there exists at most one such a chain for any pair $(x_i, y_j)\in \{x_1, \ldots x_N\}\times \{y_1, \ldots y_N\}$. In particular there exists at most $N^2$ such chains.
\par
Let now $\z_l\in C_{i, j}$. If the degree of $\z_l$ is greater than $2$, there exist $deg(\z_l)-1>1$  chains  which contain $\z_l$. By Definition~\ref{regular}-b it follows that if two chains $C_{i^{'}, j^{'}}$, $C_{i^{"}, j^{"}}$ intersect the chain $C_{i, j}$ then either
$C_{i^{'}, j^{'}}=C_{i^{"}, j^{"}}$ (and, in particular, they intersect $C_{i,j}$ at the same point), or
 $i^{"}\not= i^{'}$ and $j^{"}\not= j^{'}$. Hence the number of chains crossing $C_{i,j}$ is bounded by $2N$. As the number of chains $\{ C_{i,j}\}$ is bounded by $N^2$ it follows that there exists at most  $2N^3$ chains which intersect other chains. Hence $\sum_{v\in V(\hat{\gamma})} (deg(v)-2) \leq 2N^3$ which implies the result.
\end{proof}
Next, we elaborate some properties of an optimal regular plan.
\begin{definition} A chain of a regular plan is a sequence of indices $i_1, \ldots, i_k$ such that $\gamma_{i_l, i_{l+1}}>0$ for $k>l\geq 1$ while $\gamma_{i_l, j}=0$ for any $j\in \{ 1 \ldots, n+2N\}$. A maximal chain  is a chain which is not contained in a larger chain.
\end{definition}
\begin{remark}\label{eq}By (\ref{linpro}) we also get that $\gamma_{i_l, i_{l+1}}$ is a constant along any maximal chain $i_1, \ldots, i_k$  where $1<l<k$.\end{remark}
\begin{lemma}\label{line}
If $\gamma$ is a regular  {\em optimal} plan then for any chain $\{\zeta_{i_1}, \ldots \zeta_{i_k}\}$,
$\z_{i_{l+1}}-\z_{i_l}=\z_{i_{l^{'}+1}}-\z_{i_{l^{'}}}$
for any $l, l^{'}\in \{1, \ldots k-1\}$. In particular, all points on a chain of the associated  directed graph corresponding to an {\it optimal} plan are equally spaced on a {\em line segment}.
\end{lemma}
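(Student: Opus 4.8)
The plan is to separate the optimality of the flow $\gamma$ from the optimality of the positions $Z$, and to use only the latter. First I would reduce to \emph{maximal} chains: any chain is a consecutive subsequence of a maximal one, and the conclusion ``all difference vectors equal'' is inherited by subsequences, so it suffices to treat maximal chains. Fix an optimal regular pair $(Z,\gamma)$ and a maximal chain $\z_{i_1},\ldots,\z_{i_k}$. By regularity (Definition~\ref{regular}(a)) there are no loops or $2$-cycles, so each interior index $i_l$ ($1<l<k$) has degree $2$ with neighbours only $i_{l-1},i_{l+1}$; flow conservation in \er{linpro} then forces $\gamma_{i_{l-1},i_l}=\gamma_{i_l,i_{l+1}}$, and Remark~\ref{eq} gives that the mass is a single constant $m>0$ along the whole chain. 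Hence, freezing $\gamma$ and every point of $Z$ except the interior chain points $\z_{i_2},\ldots,\z_{i_{k-1}}$, the only part of $F_q$ that depends on these points is the chain term
\[
 m\sum_{l=1}^{k-1}\left|\z_{i_{l+1}}-\z_{i_l}\right|^q .
\]
Since $(Z,\gamma)$ minimises $F_q$, these interior points must minimise this sum with the endpoints $\z_{i_1},\z_{i_k}$ held fixed.

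Next I would prove the shape statement by a two-step sharp lower bound. Write $u_l:=\z_{i_{l+1}}-\z_{i_l}$ and $L:=\left|\z_{i_k}-\z_{i_1}\right|=\left|\sum_{l=1}^{k-1}u_l\right|$. The triangle inequality gives $\sum_l|u_l|\ge L$, with equality iff the $u_l$ are nonnegative multiples of one common direction; and convexity of $t\mapsto t^q$ for $q>1$ (Jensen / power mean) gives
\[
 \sum_{l=1}^{k-1}|u_l|^q\ \ge\ (k-1)^{1-q}\Bigl(\sum_{l=1}^{k-1}|u_l|\Bigr)^q\ \ge\ (k-1)^{1-q}L^q ,
\]
with equality in the first step iff all $|u_l|$ are equal. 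The configuration in which the points are equally spaced along the segment $[\z_{i_1},\z_{i_k}]$ gives $|u_l|\equiv L/(k-1)$ and attains the value $(k-1)^{1-q}L^q$, hence is a minimiser. Therefore the minimising interior points force equality in both steps: all $|u_l|$ are equal and all $u_l$ point in a common direction, i.e. the vectors $u_l=\z_{i_{l+1}}-\z_{i_l}$ are all equal, which is exactly the claim. (Equivalently, one may differentiate the chain term: stationarity at $\z_{i_l}$ reads $|u_{l-1}|^{q-2}u_{l-1}=|u_l|^{q-2}u_l$, whence $|u_{l-1}|=|u_l|$ and then $u_{l-1}=u_l$; convexity guarantees this critical point is the global minimum.)

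The step I expect to be the main obstacle is the justification that perturbing the interior chain points leaves the remainder of $F_q$ untouched, so that the cost genuinely decouples into the single convex term $m\sum_l|u_l|^q$. This is precisely where regularity and Remark~\ref{eq} are indispensable: without degree-$2$ interior vertices and the absence of cycles, an interior point could lie on several chains, and without constant mass the stationarity relation would read $m_{l-1}|u_{l-1}|^{q-2}u_{l-1}=m_l|u_l|^{q-2}u_l$, yielding only proportional (not equal) difference vectors. A minor degenerate case, in which some $u_l=0$, is absorbed by the equality analysis, since it propagates to force the whole chain to collapse to a single point.
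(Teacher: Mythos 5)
Your proof is correct, but its main route differs from the paper's. The paper argues purely locally: since $Z$ minimizes $F_q(\cdot,\gamma)$ for the optimal regular plan $\gamma$, the partial derivative $\partial F_q/\partial z_{i_l}$ must vanish at each interior chain point, and together with the constancy of mass from Remark~\ref{eq} this yields the stationarity relation $|u_{l-1}|^{q-2}u_{l-1}=|u_l|^{q-2}u_l$, whence $|u_{l-1}|=|u_l|$ and then $u_{l-1}=u_l$ --- exactly the argument you relegate to your closing parenthesis (incidentally, the paper's displayed gradient has a typo, with $|z_{i_l}-z_{i_{l-1}}|^{q-2}$ appearing in a denominator rather than as a factor; your parenthetical version is the corrected one). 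Your primary route instead proves a sharp global lower bound $\sum_l|u_l|^q\geq (k-1)^{1-q}L^q$ via the power-mean and triangle inequalities, checks that equal spacing attains it, and reads the claim off the two equality cases. This buys three things the paper's proof does not deliver explicitly: (i) the degenerate case where some $u_l=0$, at which the paper's first-order formula is singular, is absorbed by the equality analysis; (ii) you obtain that equal spacing is the \emph{global} minimizer of the chain term, not merely a critical point, so no separate convexity appeal is needed to upgrade criticality; (iii) you spell out the decoupling step --- that $F_q$ depends on the interior chain points only through $m\sum_l|u_l|^q$ --- which the paper leaves implicit in the phrase ``embedded in a chain.'' One small misattribution: the degree-$2$ structure of interior chain vertices does not follow from Definition~\ref{regular}(a) alone, since acyclicity does not preclude branching (a vertex of degree $3$, say a Steiner point, is compatible with (a)); rather it is built into the paper's definition of a chain (the unique-successor condition) combined with conservation in \er{linpro}, which is precisely the content of Remark~\ref{eq} --- and since you invoke that remark anyway, your decoupling argument stands as written.
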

\begin{proof}
If $\gamma^R_0$ is a regular optimal plan then $Z=(z_1, \ldots z_n)$ is a minimizer of $F_q(Z, \gamma_0^R)$ in $(\R^k)^n$. In particular $\frac{\partial F_q}{\partial z_j}=0$ holds for any $1\leq j\leq n$. If $j=i_l$ is embedded in a chain then by definition and Remark~\ref{eq} we obtain
$$ \frac{\partial F_q}{\partial z_j}=q\gamma_{i_l, i_{l+1}}  \left[ \frac{z_{i_l}-z_{i_{l-1}}}{|z_{i_l}-z_{i_{l-1}}|^{q-2}} - \frac{z_{i_{l+1}-z_{i_{l}}}}{|z_{i_{l+1}}-z_{i_{l}}|^{q-2}} \right]\ =0 $$
which implies the result.
\end{proof}
Let us now re-define the associated   directed graph $(\hat{\gamma},m)$ corresponding to {\em an optimal} regular plan
(see Fig 2)
\begin{definition}\label{defred}
The {\em reduced} weighted graph $(\hat{\gamma}_R,m)$ associated with an optimal regular plan is obtained from $(\hat{\gamma},m)$ (Definition~\ref{defgraphdef}) by identifying all edges corresponding to a maximal chain
$\{ i_1, \ldots i_{k}\}$ with a single edge $[\z_{i_1}, \z_{i_k}]$ and assigning the the common weight $m_e= \gamma_{i_l, i_{l+1}}$ to this edge (recall Remark~\ref{eq}).
\end{definition}
\begin{figure}
\includegraphics[height=5cm,width =20cm]{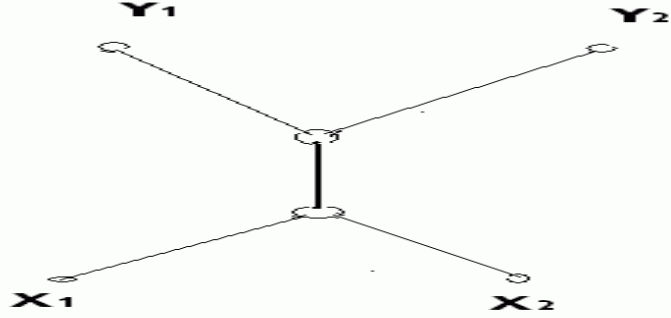}
\caption{The reduced version of the tree presented in Fig 1: All vertices of degree $2$ removed. }
\end{figure}
\begin{corollary}\label{redcor}
A reduced weighted graph $(\hat{\gamma}^n_R,m)$ associated with an optimal regular plane in $\ccn$
satisfies the following:
\begin{description}
\item{(i)} All the vertices of $\hat{\gamma}^n_R$ are of degree at least 3.
\item{(ii)} The number of vertices of $\hat{\gamma}^n_R$ is at most $2N^3$ where $N$ is the number of atoms of $\lambda^\pm$ (in particular, independent of $n$).
    \item{(iii)} All the edges of $\hat{\gamma}^n_R$ are line segments.
    \item{(iv)} There exists $C>0$, depending only on $N$, such that $C>m_e>1/C$ for any $e\in E(\gamma^n_R)$.
    \item{(v)} There is a compact set $K\subset \R^k$ which contains $\hat{\gamma}^n_R$ for any $n\in \mathbb{N}$.
\end{description}
\end{corollary}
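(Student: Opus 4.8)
The plan is to dispatch the five assertions in increasing order of difficulty, leaning on the two structural facts already in hand. Items (iii) and (ii) come almost for free. By \rlemma{line} the successive difference vectors $\z_{i_{l+1}}-\z_{i_l}$ along any chain of an optimal plan are all equal, so the vertices of a maximal chain are collinear and the collapsed edge $[\z_{i_1},\z_{i_k}]$ of \rdef{defred} is a genuine line segment; that is (iii). For (ii), \rlemma{lemreg} already bounds the total branching of the unreduced graph, so I would recover the $n$-independent count by observing that every surviving non-terminal vertex contributes at least $1$ to the quantity $\sum_{v}(\deg(v)-2)$, which that lemma bounds by $2N^3$; the $2N$ terminals add a further $n$-independent amount. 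For (i) I would note that the reduction removes exactly the degree-two vertices (the interior points of maximal chains), while a Steiner vertex $\z_i$, $i\geq N+1$, can never have degree one: the conservation constraint in \er{linpro} together with the acyclicity of \rdef{regular}(a) (forcing $\gamma_{i,j}\gamma_{j,i}=0$) would make a single incident edge carry either pure inflow or pure outflow, contradicting $\sum_i\gamma_{i,j}=\sum_i\gamma_{j,i}$. Hence every non-terminal vertex of $\hat\gamma^n_R$ has degree at least $3$.

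The heart of the matter is the uniform two-sided mass bound (iv). The upper bound is immediate, since any $m_e=\gamma_{i,l}$ is dominated by the fixed total mass $\sum_i m_i$. For the lower bound — which must be uniform in $n$ — I would exploit that $\hat\gamma^n_R$ is a forest (\rlemma{lemreg}). Removing an edge $e$ splits its component into two parts $T_1,T_2$; since every interior vertex conserves flow by \er{linpro}, in a forest the mass on $e$ equals, in absolute value, the net terminal supply on one side, namely $\big|\sum_{x_i\in T_1}m_i-\sum_{y_j\in T_1}m^*_j\big|$. As the terminal masses $\{m_i,m^*_j\}$ are fixed data, this expression ranges over a finite set depending only on $\lambda$ (hence on $N$) and not on $n$; moreover a vanishing net supply would force zero flow and thus no edge. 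Therefore each $m_e>0$ is bounded below by the smallest positive value $\delta_0$ of that finite set, and (iv) holds with $C=\max\{\sum_i m_i,\ \delta_0^{-1}\}$. Recognising the edge masses as cut values that take only finitely many values is the step I expect to be the main obstacle, since it is precisely what yields uniformity in $n$.

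Finally, for the compact confinement (v) I would use a projection argument. Let $H:=\mathrm{conv}\{x_1,\ldots,x_N,y_1,\ldots,y_N\}$, which is compact and independent of $n$, and let $P_H$ be the nearest-point projection onto $H$, a $1$-Lipschitz map. Replacing each Steiner point $\z_i$ by $P_H(\z_i)$ while keeping the same $\gamma$ stays in $\Gamma(n,\lambda^+,\lambda^-)$ and cannot increase any edge length, hence cannot increase $F_q$. If some Steiner vertex lay outside $H$, then — because by the proof of \rlemma{lemreg} every vertex sits on a chain joining a source to a sink — following that chain toward a terminal one meets an edge from an exterior vertex to a point of $H$, and for such an edge the projection strictly shortens the length, strictly decreasing $F_q$ and contradicting optimality. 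Thus every vertex of $\hat\gamma^n_R$ lies in $H$, and since by (iii) the edges are segments between points of the convex set $H$, the whole graph is contained in $K:=H$ for every $n$. Assembling (i)--(v) then completes the proof.
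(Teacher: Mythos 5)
Your proposal is correct, and on the two substantive items it takes a genuinely different route from the paper. For (i)--(iii) you and the paper coincide: the paper dispatches (i) by \rdef{defred}, (ii) by \rlemma{lemreg} (your accounting via $\sum_v(\deg(v)-2)\leq 2N^3$ is exactly the quantity inside that lemma's proof), and (iii) by \rlemma{line}; your added observations --- that conservation in \er{linpro} together with acyclicity rules out degree-one Steiner vertices, and that (i) can only be meant for \emph{non-terminal} vertices, since a source $x_i$ may perfectly well have degree $1$ --- are refinements the paper silently omits. The real divergence is (iv): the paper proves it by ``repeating the proof of Lemma~\ref{compact} with the additional information of (ii)'', i.e.\ rerunning the chain-propagation estimate $\mu(\{z_{i_l}\})\geq Q/(\#\text{edges})^{l-1}$ with the $n$-independent edge count in place of $n+N$, whereas you exploit the forest structure to identify each edge mass \emph{exactly} as a cut value $\bigl|\sum_{x_i\in T_1}m_i-\sum_{y_j\in T_1}m^*_j\bigr|$, which ranges over a finite set (at most $2^{2N}$ values) determined by $\lambda$ alone. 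Your version buys more: it gives exact values rather than geometric-decay bounds, and it cleanly yields the uniform \emph{lower} bound on $m_e$, which is the direction the paper's chain argument handles only implicitly (note, though, that your $C$ --- like the paper's $M$ --- depends on the masses of $\lambda$, not literally only on $N$, so the statement's phrasing is loose in both treatments). For (v) the paper says only ``evident''; your nearest-point projection onto $H=\mathrm{conv}\{x_1,\ldots,y_N\}$, with strict shortening of a chain edge crossing out of $H$ contradicting optimality of $Z$ for fixed $\gamma$, supplies an actual proof and correctly recognizes that confinement is a consequence of \emph{optimality}, not of the constraints $\Gamma(n,\lambda^+,\lambda^-)$ alone. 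In short: (i)--(iii) as in the paper, (iv)--(v) by a different and tighter argument.
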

\begin{proof}
Part (i) follows directly from Definition~\ref{defred}. Part (ii) from Lemma~\ref{lemreg}, part (iii) from Lemma~\ref{line}. To prove part (iv) we repeat the proof of Lemma~\ref{compact}, with the additional information of (ii) (that is, the bound on the number of edges is independent of $n$). Part (v) is evident.
\end{proof}
\section{Proof of Theorems~\ref{th1} and \ref{th2}}
\begin{proof} of theorem~\ref{th1}: \\
Let $(\hat{\gamma},m)$ be a weighted graph. Then by the H\"{o}lder inequality
 \be\label{ineq1}\sum_{e\in E(\hat{\gamma})}m^{1/q}_e|e|\leq \left(\sum_{e\in E(\hat{\gamma})}m_e|e|^q\right)^{1/q}|E(\hat{\gamma})|^{(q-1)/q} \ .  \ee
If, moreover, $(\hat{\gamma},m)$ is obtained from a regular plan $\gamma\in\Gamma(n,\lambda^+, \lambda^-)$ then
 \be\label{ineqWq} W^q_q(\mu_n+\lambda^+,\mu_n+\lambda^-)\leq \sum_{e\in E(\hat{\gamma})}m_e|e|^q\ee
 where $\mu_n\in \ccn$ associated with $\gamma$ via (\ref{mun}). By Lemma~\ref{overWlem} there exists an optimal measure $\mu_n\in \ccn$. Hence
 (\ref{ineqWq}) holds with an equality for this choice of $\mu_n$. Moreover, $\mu_n$ can be chosen to be a regular measure (Definition~\ref{regular})
 hence, by (\ref{ineq1},\ref{ineqWq}) and by Lemma~\ref{lemreg}
$$\widehat{W}_q(\lambda)\leq \sum_{e\in E(\hat{\gamma})}m^{1/q}_e|e|\leq \overline{W}^{(n)}_q(\lambda^+, \lambda^-)|n+2N^3|^{(q-1)/q} \ .  $$
This implies the inequality
$$\liminf_{n\rightarrow\infty}  n^{1-1/q} \overline{W}^{(n)}_q\left(\lambda^+, \lambda^-\right)\geq  \widehat{W}^{(q)}(\lambda) \ . $$
To prove the reverse inequality in (\ref{main}) we consider an optimal weighed graph $(\hat{\gamma},m)$ of $\widehat{W}_q(\lambda)$ and
 construct $\mu_n\in\ccn$ supported on $\hat{\gamma}$ which satisfy
  $$\lim_{n\rightarrow\infty}  n^{1-1/q} W_q\left(\mu_n+\lambda^+, \mu_n+\lambda^-\right)= \sum_{e\in E(\hat{\gamma})} m_e^{1/q} |e| = \widehat{W}^{(q)}(\lambda) \ . $$
  Assume $n_e$ is the number of points of $\mu_n$ on the edge $e$, and any atom of $\mu_n$ in $e$ is of weight $m_e$.  The contribution to $W_q^q(\mu_n+\lambda^+,  \mu_n+\lambda^-)$ from
$e$ is, then
$$ \approx m_e \left(\frac{|e| }{n_e}\right)^q n_e= \frac{m_e |e|^q}{n_e^{q-1}}$$
$$ n^{q-1}W_q^q( \mu_n+\lambda^+,\mu_n+\lambda^-) \approx n^{q-1}\sum_{e\in E(\hat{\gamma})}\frac{m_e |e|^q}{n_e^{q-1}}$$
The constraint on $n_e$ is given by
$\sum_{e\in E(\hat{\gamma})} n_e=n$.
Let us rescale $w_e:= n_e/n$. Then we need to minimize
$$ F(w):=\sum_{e\in E(\hat{\gamma})}\frac{m_e |e|^q}{w_e^{q-1}}$$
subjected to
$\sum_{e\in E(\hat{\gamma})} w_e=1$.
Let $\alpha$ be the Lagrange multiplier with respect to the constraint $\sum_{e\in E(\hat{\gamma})}w_e$. Since $F$ is convex in $w_e$ we get that $F$ is maximized at
\be\label{min1}\max_{\alpha}\min_w F(w)+ \alpha (\sum_{e\in E(\hat{\gamma})} w_e-1) \ . \ee
So. let
$$G(\alpha):= \min_w F(w)+  \sum_{e\in E(\hat{\gamma})} w_e\alpha$$
The minimizer is obtained at
$$ (q-1)\frac{m_e|e|^q}{w_e^q}=\alpha\Longrightarrow   w_e= (q-1)^{1/q}m_e^{1/q}|e| \alpha^{1/q} $$
so
$$ G(\alpha)= q(q-1)^{1/q-1} \sum_{e\in E(\hat{\gamma})}m_e^{1/q}|e|\alpha^{(q-1)/q} \ . $$
and the minimum is obtained at
 \be\label{111} \min_{(m,\hat{\gamma})\in \Gamma(\lambda)}\max_{\alpha} G(\alpha)-\alpha= \max_{\alpha} q(q-1)^{1/q-1} \widehat{W}^{(q)}(\lambda)\alpha^{(q-1)/q}-\alpha = \left(\widehat{W}^{(q)}(\lambda)\right)^q\ee
\end{proof}
\begin{proof} of Theorem~\ref{th2}: \\
Let us consider the sequence of reduced weighted graphs $(\hat{\gamma}^n_{R}, m_n)$ (see Definition~\ref{defred}) associated with a regular minimizer $\gamma_n$. By Corollary~~\ref{redcor}-(v) there exists a limit $\hat{\gamma}_R$  (in the sense of Hausdorff metric) of a subsequence of  $\hat{\gamma}^n_{R}$. By (ii-iv) of the Corollary,  $|E(\hat{\gamma}_R)| < 2N^3$ and is $E(\hat{\gamma}_R)$ is composed of lines. Moreover, the weights $m_n: E(\hat{\gamma}^n_R)\rightarrow \R^+$ converges also, along a subsequence, to
 $m: E(\hat{\gamma}_R)\rightarrow \R^+$ so $(m, \hat{\gamma}_R)\in \Gamma(\lambda)$ (see Definition~\ref{gammal}-(2)). Moreover
  \be\label{lim22}\lim_{n\rightarrow\infty} \sum_{e\in E(\hat{\gamma}_R^n)} m^{1/q}_{n,e}|e| =  \sum_{e\in E(\hat{\gamma}_R)} m^{1/q}_e|e| \ee
 By definition of the reduced graph (Definition~\ref{defred}) and, in particular, Remark~\ref{eq}  we observe that $ \sum_{e\in E(\hat{\gamma}_R^n)} m^{1/q}_{n,e}|e|$ is identical to the same expression on the non reduced graph $\hat{\gamma}^n$:
  \be\sum_{e\in E(\hat{\gamma}_R^n)} m^{1/q}_{n,e}|e| = \sum_{e\in E(\hat{\gamma}^n)} m^{1/q}_{n,e}|e| \ .  \ee
 However, on the non-reduced graphs we also have the inequalities (\ref{ineq1}, \ref{ineqWq})
 \be\label{lim33}\sum_{e\in E(\hat{\gamma}^n)}m^{1/q}_{n,e}|e|\leq \left(\sum_{e\in E(\hat{\gamma}^n)}m_{n,e}|e|^q\right)^{1/q}|E(\hat{\gamma}^n)|^{(q-1)/q}= \overline{W}^{(n)}_q(\lambda^+, \lambda^-)|E(\hat{\gamma}^n)|^{(q-1)/q}\ee
where $\overline{W}^{(n)}_q$ as defined in (\ref {overW}). Here we also used the optimality of $\gamma^n$.
\par
Finally, from Theorem~\ref{th1}
$$ \lim_{n\rightarrow\infty}\overline{W}^{(n)}_q(\lambda^+, \lambda^-)|E(\hat{\gamma}^n)|^{(q-1)/q}= \lim_{n\rightarrow\infty}\overline{W}^{(n)}_q(\lambda^+, \lambda^-)n^{(q-1)/q} = \widehat{W}_q(\lambda) \ . $$
 This and (\ref{lim22}-\ref{lim33}) yields
 $$ \sum_{e\in E(\hat{\gamma}_R)} m^{1/q}_e|e| \leq \widehat{W}_q(\lambda) $$
while the opposite inequality follows from the definition of $\widehat{W}_q$.
\end{proof}

\begin{center} {\bf References}\end{center}
\begin{description}
\item{[BS]} G. Buttazzo and E. Stepanov {\it Optimal Urban Networks via Mass Transportation}, Lec. Notes in Math. {\bf 1961}, Springer (2009)
\item{[HRW]} \ F.K. Hwang, D.S. Richards, P. Winter, {\it The Steiner Tree Problem}. Elsevier, North-Holland, 1992
\item{[BCM]} M. Bernot, V. Caselles and J.M. Morel, {\it Optimal Transportation Networks}, Lec. Notes in Math. {\bf 1955}, Springer (2009)
\item{[X]} Q. Xia \ \ {\it Optimal paths related to transport problems}, Communications in Contemporary Math, {\bf 5}, 251-279, 2003
\item{[W]} \ \ G. Wolansky, {\it Some new links between the weak KAM and Monge problems}, arXiv:0903.0145
\end{description}
\end{document}